\documentclass[a4paper,reqno,12pt]{amsart}
\usepackage[english]{babel}

\usepackage[utf8]{inputenc}
\usepackage[T1]{fontenc}
\usepackage{amsfonts,amsmath,amssymb,amsthm}

\usepackage[expansion=false]{microtype}
\usepackage{mathtools}
\usepackage{geometry}
\usepackage{textcomp}
\usepackage{url}
\usepackage[usenames]{color}
\usepackage{faktor}
\usepackage{xfrac}
\usepackage{euscript}
\usepackage{tikz}
\usepackage{tikz-cd} 
\usepackage{enumitem}
\usepackage{graphicx}

\theoremstyle{theorem}
\newtheorem{theorem}{Theorem}[section]

\theoremstyle{definition}
\newtheorem{definition}[theorem]{Definition}

\newtheorem{conjecture}[theorem]{Conjecture}

\newtheorem{construction}[theorem]{Construction}
\newtheorem{example}[theorem]{Example}

\theoremstyle{theorem}
\newtheorem{lemma}[theorem]{Lemma}

\theoremstyle{remark}
\newtheorem{remark}[theorem]{Remark}

\DeclareMathOperator{\Spec}{Spec}
\DeclareMathOperator{\GL}{GL}

\DeclareMathOperator{\Aut}{Aut}

\DeclareMathOperator{\A}{\mathbb A}
\DeclareMathOperator{\TT}{\mathbb T}

\DeclareMathOperator{\ZZ}{\mathbb Z}

\newcommand{\lmt}{\longmapsto}

\newcommand{\UU}{\mathcal U}
\newcommand{\kk}{\Bbbk}

\newcommand{\0}{\circ}

\makeatletter
\DeclareRobustCommand\widecheck[1]{{\mathpalette\@widecheck{#1}}}
\def\@widecheck#1#2{%
    \setbox\z@\hbox{\m@th$#1#2$}%
    \setbox\tw@\hbox{\m@th$#1%
       \widehat{%
          \vrule\@width\z@\@height\ht\z@
          \vrule\@height\z@\@width\wd\z@}$}%
    \dp\tw@-\ht\z@
    \@tempdima\ht\z@ \advance\@tempdima2\ht\tw@ \divide\@tempdima\thr@@
    \setbox\tw@\hbox{%
       \raise\@tempdima\hbox{\scalebox{1}[-1]{\lower\@tempdima\box
\tw@}}}%
    {\ooalign{\box\tw@ \cr \box\z@}}}
\makeatother

\title{Automorphism groups of semi-rigid $T$-varieties of complexity one}
\author{Kirill Rassolov}
\address{%
    \begin{minipage}{\textwidth}
        \setlength{\parindent}{0pt}
        \textbf{E-mail:} \url{kirill.rassolov@math.msu.ru}\\[4pt]
        Lomonosov Moscow State University, Faculty of Mechanics and Mathematics,\\
        Department of Higher Algebra, Leninskie Gory 1, Moscow, 119991 Russia;\\[2pt]
        and\\[2pt]
        HSE University, Faculty of Computer Science,\\
        Pokrovsky Boulevard 11, Moscow, 109028 Russia
    \end{minipage}%
}
\date{}

\thanks{The work is supported by the grant RSF 25-11-00302}
\begin{document}
\maketitle

\begin{abstract}
    We prove that if $X$ is a semi-rigid normal rational affine algebraic variety with torus action of complexity one, with only constant invertible global functions and finitely generated divisor class group, then the identity component of the automorphism group of $X$ is a semi-direct product of a torus and the maximal unipotent-generated subgroup.
\end{abstract}




\section*{Introduction}
Let $\kk$ be an algebraically closed field of characteristic zero and $\kk^+$ denote its additive group. It is well-known that for a projective variety $X$ over $\kk$, the automorphism group $\Aut(X)$ is locally algebraic. Although an affine variety $X$ may not satisfy this property, $\Aut(X)$ carries a natural structure of so-called ing-group, or infinite-dimensional algebraic group. This structure is defined such that a regular action of an algebraic group $G$ on $X$ is the same an a closed embedding of $G$ into the ind-group $\Aut(X)$. We refer to~\cite{FK} for a systematic treatment. 
 
The question of whether the connected component $\Aut(X)^\circ$ is an algebraic group is related to study of actions of the additive group $\kk^+$. Recall that an affine variety $X$ is called rigid if $X$ admits no nontrivial $\kk^+$-action. The following conjecture can be found in explicit form in~\cite{PZ}; see also~\cite[Question~9.1.5]{FK}.

\begin{conjecture}\label{RigidConj}
    If $X$ is a rigid affine variety, then $\Aut(X)^\circ$ is an algebraic torus.
\end{conjecture}
Let us note that if a variety $X$ is neither rigid nor isomorphic to $\A^1$, then $\Aut(X)^\circ$ is infinite-dimensional~\cite[Proposition 7.5]{K}. Thus the conjecture means that the torus is the only algebraic group that appears as $\Aut(X)^\circ$ for affine $X$, except for the case $X=\A^1$ and $\Aut(X)=\kk^\times\ltimes\kk^+$. Conjecture~\ref{RigidConj} has been addressed in many papers and verified in some particular cases including surfaces~\cite{PZ, BeP}, toric varieties~\cite{BoldG} (cf. also Example~2.3 and Remark~2.4 in~\cite{AG}) and $T$-varieties of complexity one~\cite{BorG}.

Developing this area, Perepechko and Regeta proposed a generalization of Conjecture~\ref{RigidConj} concerning the varieties with few $\kk^+$-actions. We say that $X$ is semi-rigid if all $\kk^+$-actions on $X$ share the same ring of invariants $\mathcal O(X)^{\kk^+}$. This is the same as all $\kk^+$-actions on $X$ commute, and if that holds, all $\kk^+$-actions are 'rational replicas' of each other, cf. Lemma~2.2 in~\cite{PR23}. Finally, let $\UU(X)$ stand for the subgroup generated by all $\kk^+$-subgroups in the automorphism group.

\begin{conjecture}(\cite{PR23}, cf. also \cite{PR24})\label{Semi-rigidConj} If $X$ is a semi-rigid affine variety, then $\Aut(X)^\circ=T\ltimes \UU(X)$ where $T$ is a maximal torus in $\Aut(X)$.
    
\end{conjecture}

Conjecture~\ref{Semi-rigidConj} is known to hold if the variety $X$ is toric, see~\cite{BoldG}. 

We denote by $\Aut_{alg}(X)$ the subgroup of $ \Aut(X)^\circ$ generated by all algebraic elements. Recall that an element $g$ is called \emph{algebraic} if it is contained in a connected algebraic subgroup. Thus $\Aut_{alg}(X)$ is generated by all connected algebraic subgroups in the automorphism group. 

It is proved in~\cite{PR24} that the following conditions are equivalent:
\begin{enumerate}
    \item[(i)] the group $\Aut(X)^\0$ is generated by algebraic elements;
    \item[(ii)] each element of $\Aut(X)^\circ$ is algebraic;
    \item[(iii)] the group $\Aut(X)^\0$ 
    is nested, i.e., is the direct limit of algebraic groups;
    \item[(iv)] $\Aut(X)^\0 = T\ltimes \UU(X)$.
\end{enumerate}
Moreover, each of these conditions \emph{implies} any of the following (and these new conditions are again equivalent, see~\cite{PR23}):
\begin{enumerate}
    \item[$\bullet$] each element of $\Aut_{alg}(X)$ is algebraic;
    \item[$\bullet$] the subgroup $\Aut_{alg}(X)\subseteq \Aut(X)^\circ$ is nested;
    \item[$\bullet$] $\Aut_{alg}(X) = T\ltimes \UU(X)$;
    \item[$\bullet$] $\UU(X)$ is abelian;
    \item[$\bullet$] $X$ is semi-rigid.
\end{enumerate}

This note addresses Conjecture~\ref{Semi-rigidConj} and verifies it in the following particular case:

\begin{theorem}\label{Theorem}
    Let $X$ be an affine semi-rigid variety and $\Aut(X)$ contain a maximal torus $T$ of dimension $\dim T = \dim X-1$. Suppose $X$ is normal, rational, with only constant invertible global functions and finitely generated divisor class group. Then $\Aut(X)^\circ=T\ltimes\UU(X)$. 
\end{theorem}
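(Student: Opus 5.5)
We plan to go through the equivalence (i)$\Leftrightarrow$(iv) from~\cite{PR24}: it suffices to show that $\Aut(X)^\circ$ is generated by algebraic elements, or equivalently that it is nested. The hypotheses (normal, rational, $\mathcal O(X)^\times=\kk^\times$, finitely generated $\mathrm{Cl}(X)$, complexity one torus action) are exactly those under which $X$ has a Cox ring of the explicit form $R(A,P)$ of Hausen--Herppich--Süß. We therefore realize $X$ as a good quotient $\widehat X/\!/H$, where $\widehat X=\Spec R(A,P)$ is a trinomial variety and $H=\Spec\kk[\mathrm{Cl}(X)]$ is a quasitorus. By Arzhantsev--Gaifullin type results, $\Aut(X)$ is identified with the quotient of the normalizer of $H$ in the group of graded automorphisms of $R(A,P)$ by $H$. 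This reduces the problem to automorphisms of $R(A,P)$ that respect the $\mathrm{Cl}(X)$-grading, up to the action of $H$.

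The key steps are as follows. First, we describe the graded automorphisms of $R(A,P)$. Any automorphism in $\Aut(X)^\circ$ lifts to a graded automorphism of the Cox ring, and it acts on the degree $\deg$ generators $T_{ij}, S_k$ by sending each generator to a homogeneous element of the same degree. Second, we use semi-rigidity to restrict these lifts. The $\kk^+$-actions on trinomial varieties are described via primitive homogeneous LNDs of "elementary" and "horizontal" type (Arzhantsev--Hausen--Herppich--Liendo, Zaitseva). Semi-rigidity says that all of them are replicas $f\partial$ of a single one. In terms of $P$, this should force a concrete combinatorial condition: roughly, the only nonzero homogeneous LNDs are of the form $h\,\partial/\partial S_k$ (or a single elementary type), and the homogeneous components of the relevant degrees are spanned by monomials in a controlled way.

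Third, we show that every graded automorphism $\varphi$ in the identity component has the form $t\cdot u$. Here $t$ lies in the diagonal torus, which descends to $T$, and $u$ is a triangular automorphism of the shape $S_k\mapsto S_k+h_k$, where $h_k$ is invariant of appropriate degree. Since all such triangular maps commute and come from the abelian group generated by the replicas, $u$ lies in $\UU(X)$. Concretely, we write $\varphi(T_{ij})$ and $\varphi(S_k)$ as sums of monomials of the same degree. The trinomial relations, together with the degree constraints from semi-rigidity, will show that $\varphi(T_{ij})=\lambda_{ij}T_{ij}$ (up to permutations, which are excluded by connectedness), while $\varphi(S_k)=\lambda_k S_k + h_k$ with $h_k$ not involving $S_k$. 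Each such element is then algebraic, lying in $T\ltimes$(a vector group), so (i) holds and we conclude by~\cite{PR24}.

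The main obstacle is the third step: the degree-by-degree analysis proving that a graded automorphism can only perturb generators by "root" terms coming from LNDs. Here we must rule out exotic non-triangular homogeneous substitutions that are not visible from the $\kk^+$-actions. Our first attempt would be to compare, for each generator, the set of monomials of its degree with the Demazure-type roots of $X$, and to show that a non-diagonal term in $\varphi(T_{ij})$ or $\varphi(S_k)$ yields a homogeneous LND not proportional to the given one, contradicting semi-rigidity. This would rely on the theorem of~\cite{BorG} for the rigid case as a model.
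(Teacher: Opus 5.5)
Your Cox set-up is fine: $X=\widehat X/\!/H$ with $\widehat X$ trinomial, and $\Aut(X)$ the quotient of $N(H)$ by $H$. But the whole proof rests on your third step, and that step is unproved and, as stated, false. The normal form $\varphi(T_{ij})=\lambda_{ij}T_{ij}$, $\varphi(S_k)=\lambda_kS_k+h_k$ already fails for the Danielewski surface $X=\{xy^2=z^2+1\}$, with $T=\kk^\times$ acting with weights $2,-1,0$ on $x,y,z$.

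This $X$ satisfies every hypothesis of the theorem. It is smooth and rational, has only constant units and $\mathrm{Cl}(X)\cong\ZZ$. It is semi-rigid, since by Makar-Limanov every LND has kernel $\kk[y]$.

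Here $m=0$. The special fibre $z=i$ of the quotient map contains the $T$-invariant prime divisor $\{x=0,\,z=i\}$, which gives a Cox generator of type $T_{ij}$. The element $x\mapsto x+2z+y^2$, $z\mapsto z+y^2$, $y\mapsto y$ of $\UU(X)$ moves this divisor to a non-$T$-invariant one. So its lift does not send that $T_{ij}$ to a multiple of itself. This is exactly the ``single elementary type'' case you allow in your second step and then drop in the third.

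The deeper problem is that your proposed mechanism is not available. A general $\varphi\in\Aut(X)^\circ$ does not normalize $T$, so its lift is homogeneous only for the coarse $\mathrm{Cl}(X)$-grading. The components of that grading are infinite-dimensional modules over $R_0=\mathcal O(X)$, so there is no degree-by-degree linear algebra. Nor is there a general way to extract a homogeneous LND from a ``non-diagonal term'' of an arbitrary element of an ind-group; that kind of argument works inside algebraic groups normalized by $T$, via weight decompositions. In the rigid case of \cite{BorG} this is harmless because $T$ is then normal in $\Aut(X)$, and semi-rigidity gives no such normality.

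The missing idea is to use semi-rigidity only through \cite{PR23}. It gives $\Aut_{alg}(X)=T\ltimes\UU(X)$, which is normal in $\Aut(X)$, and its maximal tori are $\UU(X)$-conjugate. So for every $g\in\Aut(X)$ there is $u\in\UU(X)$ with $u^{-1}g\in N(T)$, that is, $\Aut(X)=\UU(X)\cdot N(T)$. It then suffices to show that $N(T)^\circ$ is algebraic.

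Only for elements of $N(T)$ does the Cox lift respect the fine grading. By a dimension count and conjugacy of maximal tori, you can arrange that $\pi^{-1}(T)^\circ$ is the canonical torus $\TT$ of $\widehat X$. Then $\pi^{-1}(N(T))\subseteq N(\TT)$, and $N(\TT)$ is locally algebraic: by \cite{BorG} and \cite{EGS} in Type~1, and by pointedness and \cite{AHHL} in Type~2. This is the paper's route, and it needs neither an explicit description of $\Aut(X)^\circ$ nor a classification of LNDs.
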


\section*{Acknowledgements} The author is grateful to Sergey Gaifullin for posing the problem and useful discussions of this note. Thanks also go to Alexander Perepechko for pinpointing some inaccuracies in the first version of the manuscript.

\section{Trinomial varieties}
In this section we gather some definitions on trinomial varieties which are useful in the proof of Theorem~\ref{Theorem}. 
We define trinomial varieties  following~\cite{HW}.

\begin{construction}~\cite[Construction~1.1]{HW}
 Fix integers $n,r\ge2$ and consider a partition $n=n_{q}+\ldots+n_r$ into positive integers $n_i$ with $q\in\{0,1\}$. Let $T_{ij}$, $i=q,\ldots,r$ and $j=1,\ldots,n_i$, be independent variables. Choose also an integer $m\ge0$ and consider additional variables $S_k$ with $k=1,\ldots, m$. We will view the $T_{ij}, S_k$ as coordinates in the affine space $\A^{n+m}$ and abbreviate the corresponding polynomial algebra in $n+m$ variables by $\kk[T_{ij}, S_k]$. For each $i=q,\ldots, r$ choose a tuple $l_i=(l_{i1},\ldots, l_{in_i})$ of positive integers $l_{ij}$ and consider the monomial
 $$
 T_{i}^{l_i} = T_{i1}^{l_{i1}}\cdots T_{in_i}^{l_{in_i}}\in \kk[T_{ij}, S_k].
 $$ 

We now define the trinomial algebra $R(A)$ depending on some data $A$. We allow two types of these data.

\textit{Type $1$.} Set $q=1$ and let $A=(a_1,\ldots, a_r)$ be a tuple of pairwise distinct elements $a_i\in\kk$. Then $R(A)$ is the quotient of $\kk[T_{ij}, S_k]$ by the following relations:
\begin{equation}\label{equations1}
    T_i^{l_i}-T_{i+1}^{l_{i+1}} - (a_{i+1}-a_i)=0,\quad i=1,\ldots, r-1
\tag{1.1}
\end{equation}

\textit{Type $2$}. Set $q=0$ and let $A$ be a $2\times(r+1)$-matrix 
$$
A = \begin{pmatrix}
    a_{01} &\ldots &a_{r1}\\
    a_{02} &\ldots &a_{r2}
\end{pmatrix}
$$ with pairwise linearly independent columns, $a_{i1},\,a_{i2}\in\kk$. Then $R(A)$ is the quotient of $\kk[T_{ij}, S_k]$ by the relations:
\begin{equation}\label{equations2}
    \det \begin{pmatrix}
    T_i^{l_i} & T_{i+1}^{l_{i+1}} & T_{i+2}^{l_{i+2}}\\
    a_i & a_{i+1} & a_{i+2}
\end{pmatrix}, \quad i=0,\ldots,r-2.
\tag{1.2}
\end{equation}
\end{construction}
\begin{definition}
    We call $X(A)=\Spec R(A)$ a \emph{trinomial variety.}
\end{definition}

Note that the dimension of the trinomial variety $X(A)=\Spec R(A)$ is $n+m-r+1$. The algebra $R(A)$ admits the natural $\ZZ^{n+m-r}$-grading defined as follows. Suppose that each variable $T_{ij}$ has degree $w_{ij}$ and each $S_k$ has degree $v_k$. These degrees with the relations $\deg T_i^{l_i}=0$ for Type 1 and $\deg T_i^{l_i}=\deg T_j^{l_j}$ for Type 2 generate an abelian group with torsion-free part of rank $n+m-r$, see~\cite{HW} for details. This grading defines an actions of the torus $\TT$ of dimension $n+m-r$. The action of $\TT$ on $X(A)$ comes from the action of the standard torus of dimension $n+m$ on the affine space with coordinates $T_{ij}, S_k$.

\begin{definition}
     We call the torus $\TT$ described above the \textit{canonical torus} of the trinomial variety~$X(A)$.
\end{definition}

\begin{remark} (see \cite{HW})\label{RemarkPoint}
    If $X(A)$ is of Type 2, then the action of the canonical torus $\TT$ is pointed, meaning that the rational polyhedral cone denerated by the degrees occurring in $R(A)$ contains no line.
\end{remark}

\section{Proof of Theorem~\ref{Theorem}}  
The following fact seems to be known and follows from a classical argument. We give the proof below in lack of a precise reference.

\begin{lemma}\label{LemmaHumphreys}
    Let $G$ be an ind-group containing a torus $T$ as a closed subgroup. Then $N_G(T)^\circ = C_G(T)^\circ$.
\end{lemma}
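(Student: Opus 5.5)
We adapt the classical rigidity-of-tori argument (as in Humphreys, \S16.3) to the ind-group setting. Let $N=N_G(T)$ and $C=C_G(T)$. Both are closed subgroups of $G$, since each is cut out by the closed conditions $gtg^{-1}\in T$, respectively $gtg^{-1}=t$, for all $t\in T$. Clearly $C\subseteq N$, so $C^\circ\subseteq N^\circ$, and we only need the reverse inclusion.

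First consider the conjugation action $N\times T\to T$, $(g,t)\mapsto gtg^{-1}$. This is a morphism of ind-varieties, hence gives a homomorphism of abstract groups $\varphi\colon N\to \Aut(T)$. Write $T\cong(\kk^\times)^d$; then $\Aut(T)\cong \GL_d(\ZZ)$ acts on the character lattice $\mathfrak X(T)$.

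The key step is to show that $N^\circ$ acts trivially. Fix $t\in T$ of finite order $k$. The set $T[k]$ of $k$-torsion points of $T$ is finite and $N$-stable. The orbit map $N^\circ\to T[k]$, $g\mapsto gtg^{-1}$, is a morphism from a connected ind-variety to a finite discrete set, and is therefore constant. Hence $g$ fixes every torsion element of $T$. Torsion points are dense in $T$, and $t\mapsto gtg^{-1}$ is a morphism $T\to T$ agreeing with the identity on a dense subset, so it is the identity. Thus $N^\circ\subseteq C$.

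Since $N^\circ$ is connected and contains the identity, it lies in the connected component of $C$, giving $N^\circ\subseteq C^\circ$, which proves the equality.

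The main obstacle is the meaning of ``connected'' for ind-groups. In the sense of~\cite{FK}, $G^\circ$ is the union of the images of irreducible (or connected) algebraic families through the identity, i.e.\ connectedness is curve-connectedness. The constancy step should therefore be run along such families: for any irreducible variety $Y$ with a morphism $Y\to N$ whose image contains $e$, the composite $Y\to T[k]$ is a morphism from an irreducible variety to a finite set, hence constant. This yields $N^\circ\subseteq C$, and every such family lands in $C$ and passes through $e$, so it lands in $C^\circ$.
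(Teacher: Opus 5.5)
Your proposal is correct and follows the paper's route. The paper also restricts to irreducible algebraic subvarieties $V\subseteq N_G(T)$ through $e$ and applies rigidity of tori to $V\times T\to T$, $(g,t)\mapsto gtg^{-1}$, citing Humphreys \S16.3. You instead prove that rigidity inline, via the finite $N$-stable torsion subgroups and their density in $T$, and you make explicit the final inclusion $V\subseteq C_G(T)^\circ$, which the paper leaves implicit.
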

\begin{proof}
    Clearly both $N_G(T)$ and $C_G(T)$ are closed subgroups of $G$, therefore carrying natural structures of ind-groups. By definition, the connected component of the ind-group $N_G(T)$ consists of all elements that can be included in a closed irreducible algebraic subvariety $V\subseteq N_G(T)$ containing the neutral element. Define a morphism $V\times T\to T$ via $(g, t)\longmapsto g tg^{-1}$. According to~\cite[Proposition in 16.3]{H}, this morphism is constant in $g$, hence any $g\in V$ commutes with~$T$.
\end{proof}

In the following lemma, we investigate the normalizer $N(\TT)$ of the canonical torus $\TT$ in the automorphism group of a trinomial variety. Recall that an ind-group $G$ is called locally algebraic if $G^\circ$ is an algebraic group.

\begin{lemma}\label{Trinomial-N-Lemma}
    Let $X$ be a trinomial variety, $\TT$ the canonical torus of $X$.
    \begin{enumerate}
        \item[\emph{(i)}] In case of Type 1, $N(\TT)$ is a locally algebraic group having a torus as identity component.
        \item[\emph{(ii)}] In case of Type 2, $N(\TT)$ is a linear algebraic group.
    \end{enumerate} 
\end{lemma}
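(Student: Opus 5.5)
By Lemma~\ref{LemmaHumphreys}, $N(\TT)^\circ=C(\TT)^\circ$. So the plan is to describe the centralizer $C(\TT)$, i.e.\ the group of graded automorphisms of $R(A)$ with respect to the $K$-grading defining $\TT$. After that we control the component group $N(\TT)/C(\TT)^\circ$.

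\textbf{Step 1: the centralizer acts on homogeneous components.} An automorphism commuting with $\TT$ preserves every homogeneous component $R(A)_w$, where $w$ ranges over the character lattice of $\TT$.

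\textbf{Step 2: Type 2.} By Remark~\ref{RemarkPoint} the grading is pointed. Hence every homogeneous component $R(A)_w$ is finite dimensional, and $R(A)_0=\kk$.
\begin{itemize}
\item Choose a finite-dimensional graded subspace $V$ generating $R(A)$, namely the span of all components containing the generators $T_{ij},S_k$.
\item The group $C(\TT)$ embeds as a closed subgroup of $\prod_w \GL(R(A)_w)$ over the finitely many degrees $w$ of generators. Hence $C(\TT)$ is a linear algebraic group.
\item For the normalizer: $N(\TT)/C(\TT)$ embeds into $\Aut(\TT)\cong\GL_d(\ZZ)$, via the induced permutation of degrees. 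Its image must preserve the finite set of weights of a finite-dimensional $\TT$-stable generating subspace, and the cone they generate.
\item To get such a finite set canonically, I would use the minimal degrees: the extremal rays of the weight cone, together with the finitely many primitive generator degrees. Conjugation by elements of $N(\TT)$ permutes weights while preserving the weight monoid. An automorphism of the lattice preserving a pointed finitely generated monoid permutes its (finite) Hilbert basis, which spans the lattice.
\item Therefore the image is finite, and $N(\TT)$ is a finite extension of $C(\TT)$, hence linear algebraic.
\end{itemize}

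\textbf{Step 3: Type 1.} Here $\deg T_i^{l_i}=0$, so $R(A)_0$ is infinite dimensional; it contains $\kk[T_1^{l_1}]$, which is the function field of the quotient curve $\A^1$. Being graded, $C(\TT)$ induces automorphisms of $R(A)_0$ and hence of the quotient $X/\!/\TT\cong\A^1$. The key claim is that $C(\TT)$ must permute the finitely many special fibers over the points $a_i$, where the $T_i$-divisors degenerate; there are $r\ge2$ of them.
\begin{itemize}
\item An affine transformation of $\A^1$ permuting $r\ge2$ distinct points has finite order. So the induced action on $\A^1$ lies in a finite group.
\item The kernel acts trivially on $R(A)_0$. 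Over the generic point it is a graded automorphism fixing the base, so on each component it acts by a unit of $R(A)_0$ times a translation-free map.
\item Normality and the divisor structure, via the prime divisors $V(T_{ij})$ which must be permuted, force the kernel to rescale $T_{ij}$ and $S_k$ by scalars compatible with the relations. The kernel is therefore a diagonalizable group whose identity component is $\TT$ itself, since $\TT$ is maximal among tori acting this way.
\item Combined with Lemma~\ref{LemmaHumphreys}, $N(\TT)^\circ$ is a torus. The component group is discrete, so $N(\TT)$ is locally algebraic.
\end{itemize}

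\textbf{Main obstacle.} The main difficulty is Step 3: showing the centralizer in Type 1 cannot contain non-toral unipotent-like graded automorphisms. An example would be a map $S_k\mapsto S_k+f\cdot(\text{something of same degree})$. Since all $S_k$ might share degrees with monomials in the $T$'s times invariants, one must exclude this. I would first use that the $S_k$ have degrees outside the span of the $T$-degrees modulo relations. Failing that, I would use that such maps are exponentials of homogeneous locally nilpotent derivations of degree zero, which are ruled out by the known description of homogeneous LNDs of degree $0$ on trinomial varieties (they do not exist, as a degree-zero LND would kill $R(A)_0$ and yield a bigger torus normalizing action). An infinite family of them would contradict the finiteness of $\dim$ in the vertical direction.
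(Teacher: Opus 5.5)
Your Type~2 argument is sound. It is essentially a self-contained proof of [AHHL, Proposition~2.2], which the paper simply cites: graded automorphisms form a linear algebraic group because the relevant components are finite dimensional, and $N(\TT)/C(\TT)$ is finite because it acts faithfully on the Hilbert basis of the weight monoid.

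\textbf{The gap is in Type~1.} First, $\TT$ only sees the torsion-free part of the grading, so $R(A)_0$ can be strictly larger than $\kk[T_1^{l_1}]$. If $n_i=1$, then $\deg T_{i1}$ is torsion and $T_{i1}\in R(A)_0$. Thus $C(\TT)$ preserves $R(A)_0$ but need not preserve $\kk[T_1^{l_1}]$. Moreover, the fiber over $a_i$ need not be special, and when all $n_i=1$ there may be no special fiber at all.

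Concretely, take $r=2$, $n_1=n_2=1$, $l_{11}=1$, $l_{21}=2$, $m=1$. Then $R(A)\cong\kk[T_{21},S_1]$, $\TT=\kk^\times$ scales only $S_1$, and $T_{21}\mapsto T_{21}+b$ commutes with $\TT$. Hence $N(\TT)^\circ=C(\TT)^\circ\cong\mathrm{Aff}_1\times\kk^\times$ is not a torus. Also $\partial/\partial T_{21}$ is a homogeneous LND of degree zero, contrary to the claim in your final paragraph. So no argument can give (i) as stated in this case. The case where all $n_i=1$ has to be split off and treated directly, as the paper does. (In its affine-space subcase the paper's $\varphi^*T_{11}=aT_{11}$ should read $aT_{11}+b$, so there only local algebraicity survives; that is all Theorem~\ref{Theorem} uses.)

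Second, even when some $n_i>1$, the decisive step is only asserted (``normality and the divisor structure force\dots''). That step is: a $\TT$-equivariant automorphism acting trivially on $R(A)_0$ rescales every $T_{ij}$ and $S_k$ by constants. This is exactly what the paper imports from [BorG, Lemma~6.2 and Corollary~3.8]. To supply it yourself you would have to:
\begin{itemize}
\item characterize the divisors $V(T_{ij})$ intrinsically, e.g.\ as components of the non-generic fibers of $X\to X/\!/\TT$;
\item show that such an automorphism maps each of them to itself, rather than merely permuting components within a fiber;
\item deduce $\varphi^*T_{ij}\in\kk^\times T_{ij}$, with care because $T_{ij}$ need not be prime in $R(A)$.
\end{itemize}
The $S_k$ are the easy part: $R(A)_{\deg S_k}=R(A)_0\,S_k$, and an invertible multiplier must be a unit of $R(A)_0$. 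Likewise, finiteness of the induced action on the quotient curve must be argued on the true quotient $\Spec R(A)_0$, not on $\Spec\kk[T_1^{l_1}]$, whose automorphisms need not be affine maps.
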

\begin{proof}
    Let $X$ be of Type $1$. 
    If $n_i>1$ for some $i$, the assersion follows from~\cite[Lemma~6.2 and Corollary~3.8]{BorG}. 
    Thus it remains to treat the case $n_i=1$ for all $i$, i.e., each $T_{i}^{l_i}$ is a power of a variable. Then $X$ is either rigid or isomorphic to the affine space with coordinates $T_{11}, S_1,\ldots, S_m$ (see the rigidity criterion in~\cite{EGS}). In the first case, the assertion follows from~\cite[Proposition~6.3]{BorG}. 
    
    Now let $X\simeq \Spec \kk[T_{11}, S_1,\ldots, S_m]$ with $\TT\simeq (\kk^\times)^m$ acting on $S_1,\ldots, S_m$.  For any $\varphi\in N(\TT)$, the comorphism $\varphi^*$ leaves $\kk[T_{11}]$ invariant as functions of $\TT$-weight zero. Hence $\varphi^*T_{11}=aT_{11},$ $a\in\kk^\times$. By Lemma~\ref{LemmaHumphreys} we may further assume that $\varphi\in C(\TT)$. Thus $\varphi^*$ must preserve the weights of regular functions with respect to $\TT$, so $\varphi^*S_k=S_kf_k(T_{11})$ for all $k$, $f_k$ being polynomials in one variable. Since $\varphi^*$ is invertible, all the $f_k$ are constant. Therefore $N(\TT)^\circ= \kk^\times\times\TT$. 

    Finally, suppose that $X$ is of Type 2. Then the $\TT$-action is pointed (Remark~\ref{RemarkPoint}), hence $N(\TT)$ is a linear algebraic group by~\cite[Proposition~2.2]{AHHL}. 
\end{proof}

\begin{example}
    Let us show that the connected normalizer $N(\TT)^\0$ need not be a torus. Consider the affine plane $\A^2$ as the trinomial variety of Type~2 given by a linear equation in $\A^3$. The one-dimensional torus $\TT$ acts via $x\lmt tx$, $y\lmt ty$. Note that for any $\varphi\in N(\TT)$, $\varphi^*$ must take $x$ and $y$ to homogeneous polynomials of degree one (since $\varphi^*$ induces an automorphism of the weight latice of $\TT$, and there is no polynomial of negative $\TT$-weight).   
   Hence $N(\TT)$ acts linearly on $\A^2$. It follows that $N(\TT)=C(\TT)=\GL_2$.
\end{example}

\begin{proof}[Proof of Theorem~\ref{Theorem}]
    
    By \cite[Theorem 1.3 and Lemma 2.2]{PR23} we have $\Aut_{alg}(X)=T\ltimes\UU(X)$; hence we must prove that $\Aut(X)^\circ = \Aut_{alg}(X)$. For this, note that $\Aut(X)$ is generated by $N(T)$ and the maximal unipotent subgroup $\mathcal U(X)$. Indeed, conjugating $T$ by an element $g\in\Aut(X)$, one gets a maximal torus $T'\subseteq \Aut_{alg}(X)$, which is conjugate to $T$ by a unipotent element $u\in \mathcal U(X)$. Hence $u^{-1}g\in N(T)$. Thus it will be enough to show that $N(T)^\circ$ is an algebraic group. Then, since $\mathcal U(X)$ is generated by algebraic elements, so is $\Aut(X)^\circ$.

    In order to prove that $N(T)$ is locally algebraic, consider the Cox realization $X=\overline X /\!/H$. Here $\overline X$ is a trinomial variety, $H$  
    is a quasitorus such that 1) $H$ commutes with $\TT$, and 2) $H^\0$ lies in the canonical torus $\TT$ of~$\overline X$; see~\cite{HW} for this result and~\cite{ADHL} for systematic treatment of Cox rings. By the machinery of the Cox construction, one has a commutative diagram where the first row is exact (cf.~\cite[Theorem 5.1]{AG10}):
    $$
    \begin{tikzcd}
        1 \arrow[r] & H \arrow[r] &N_{\Aut\overline X}(H) \arrow[r, "\pi"] &\Aut(X) \arrow[r] &1\\
        & H^\circ \arrow[u, phantom, sloped, "\subseteq"]\arrow[r, phantom, sloped, "\subseteq"] &\TT \arrow[u, phantom, sloped, "\subseteq"] &T \arrow[u, phantom, sloped, "\subseteq"]
    \end{tikzcd}          
    $$
    We claim that the torus $\pi^{-1}(T)^\circ$ acts on $\overline X$ with complexity one. In fact, since $\overline X$ admits an open subset for which the quotient by $H$ is geometric, and the stabilizer $H_x$ of a generic point $x\in\overline X$ is trivial, it follows that $\dim H=\dim\overline X - \dim X$. Hence 
    $$\dim \pi^{-1}(T) = \dim T +\dim H = \dim\overline X -1.$$
    Moreover, the torus $\pi(\TT)$ is maximal in $\Aut(X)$ since 
    $$\dim\pi(\TT)=\dim \TT-\dim H^\0 = \dim X-1.$$
    But all maximal tori in $\Aut(X)$ are conjugate. Hence (conjugating $T$ if necessary), we may assume that $\pi^{-1}(T)^\circ=\TT$.

    Lemma~\ref{Trinomial-N-Lemma} asserts that $N(\TT)^\circ$ is an algebraic group. One readily verifies that $$\pi^{-1}(N(T)) \subseteq N(\pi^{-1}(T)) \subseteq N(\TT).$$ Hence the normalizer $N(T)$ is a locally algebraic group (being the quotient of a closed subgroup of $N(\TT)$ by the normal subgroup $H$). The assertion follows.
\end{proof}


\end{document}